\newtheorem{theorem}{Theorem}
\newtheorem{corollary}[theorem]{Corollary}
\newtheorem{definition}[theorem]{Definition}
\newtheorem{lemma}[theorem]{Lemma}
\newtheorem{proposition}[theorem]{Proposition}
\newenvironment{proof}[1][Proof]{\noindent\textbf{#1.} }{\ \rule{0.5em}{0.5em}}
\begin{document}

\title{\textbf{On the some properties of circulant matrix with third order
linear recurrent sequence}}
\author{Arzu Coskun and Necati Taskara\thanks{%
e mail: \ \textit{arzucoskun58@gmail.com, ntaskara@selcuk.edu.tr}} \\
Department of Mathematics, Science Faculty, \\
Selcuk University, Campus, 42075, Konya, Turkey}
\maketitle

\begin{abstract}
In this paper, firstly, we give the some fundamental properties of Van Der
Laan numbers. After, we define the circulant matrices $C(Z)$ which entries
is third order linear recurrent sequence. In addition, we compute
eigenvalues, spectral norm and determinant of this matrix. Consequently, by
using properties of this sequence, we obtain the eigenvalues, norms and
determinants of circulant matrices with Cordonnier, Perrin and Van Der Laan
numbers.

\textit{Keywords}: Third order linear recurrent sequence, Cordonnier
numbers, Perrin numbers, Van Der Laan numbers, circulant matrix, norm,
determinant.
\end{abstract}

\section{Introduction}

Shannon, Horadam and Anderson \cite{4} defined respectively Cordonnier
sequence $\left\{ P_{n}\right\} _{n\in 
\mathbb{N}
}$\ as 
\begin{equation}
P_{n+3}=P_{n+1}+P_{n},\ \ \ \ P_{0}=1,~P_{1}=1,\ P_{2}=1,  \label{1}
\end{equation}%
Perrin sequence $\left\{ Q_{n}\right\} _{n\in 
\mathbb{N}
}$\ as

\begin{equation}
Q_{n+3}=Q_{n+1}+Q_{n},\ \ \ \ Q_{0}=3,~Q_{1}=0,\ Q_{2}=2,  \label{2}
\end{equation}%
and Van Der Laan sequence $\left\{ R_{n}\right\} _{n\in 
\mathbb{N}
}$\ as

\begin{equation}
R_{n+3}=R_{n+1}+R_{n},\ \ \ \ R_{0}=0,~R_{1}=1,\ R_{2}=0.  \label{3}
\end{equation}

The fact that the Cordonnier numbers and Perrin numbers are a linear
combination of $\alpha ^{n}$, $\beta ^{n}$ and $\gamma ^{n}$, that is,%
\begin{equation}
P_{n}=\frac{\alpha ^{n+4}}{\left( \alpha -\beta \right) \left( \alpha
-\gamma \right) }+\frac{\beta ^{n+4}}{\left( \beta -\alpha \right) \left(
\beta -\gamma \right) }+\frac{\gamma ^{n+4}}{\left( \gamma -\alpha \right)
\left( \gamma -\beta \right) },  \label{1.2}
\end{equation}

\begin{equation}
Q_{n}=\alpha ^{n}+\beta ^{n}+\gamma ^{n}.  \label{2.2}
\end{equation}

Hence, the relations are hold%
\begin{equation}
\alpha +\beta +\gamma =0,\;\alpha \beta \gamma =1,\ \alpha \beta +\beta
\gamma +\alpha \gamma =-1,  \label{1.1}
\end{equation}%
where $\alpha $, $\beta $ and $\gamma $ are roots of equations (\ref{1}), (%
\ref{2}), (\ref{3}).\qquad

Elia \cite{12}, gave third order linear recurrent sequence$\left\{
T_{0},T_{1},T_{2},...\right\} $ defined by the recurrence

\begin{equation}
T_{n+3}=pT_{n+2}+qT_{n+1}+rT_{n},\ \ \ \ T_{0}=a,~T_{1}=b,\ T_{2}=c.
\label{5}
\end{equation}%
Also, he studied Tribonacci cubic form and solve the integer representation
problem for the Tribonacci cubic form.

Recently, it has been widely studied the some properties of the circulant
matrices with special numbers. For instance, in \cite{6}, they defined the
generalized $k$-Horadam numbers and computed the spectral norm, eigenvalues
and the determinant of circulant matrix with this numbers. Solak \cite{7}
defined \ the $n\times n$ circulant matrices $A=\left[ a_{ij}\right] $ and $%
B=\left[ b_{ij}\right] $, where $a_{ij}\equiv F_{\left( \func{mod}\left(
j-i,n\right) \right) }$ and $b_{ij}\equiv L_{\left( \func{mod}\left(
j-i,n\right) \right) }$. Additionally, he investigated the upper and lower
bounds of the matrices $A$ and $B$, respectively. In \cite{5}, Ipek obtained
the spectral norms of circulant matrices $A=\left[ a_{ij}\right] $ and $B=%
\left[ b_{ij}\right] $, where $a_{ij}\equiv F_{\left( \func{mod}\left(
j-i,n\right) \right) }$ and $b_{ij}\equiv L_{\left( \func{mod}\left(
j-i,n\right) \right) }$. Shen and Cen, in \cite{8,11}, have found upper and
lower bounds for the spectral norms of $r$-circulant matrices and obtained
some bounds for the spectral norms of Kronecker and Hadamard products of
these matrices. Also, they gave the determinants and inverses of circulant
matrices with Fibonacci and Lucas numbers. In \cite{3}, it has been studied
the norms, eigenvalues and determinants of some matrices related to
different numbers. Yazlik \cite{9} obtained upper and lower bounds for the
spectral norm of an $r$-circulant matrices $H=C_{r}\left(
H_{k,0},H_{k,1},H_{k,2},...,H_{k,n-1}\right) $ whose entries are the
generalized $k$-Horadam numbers. Additionally, he find new formulas to
calculate the eigenvalues and determinant of the matrix $H$. In \cite{10},
they defined the circulant matrices with Jacobsthal and Jacobsthal-Lucas
numbers and computed the determinants and inverses of these matrices.

In the light of the above studies, in here, we present some properties of
the Van Der Laan sequence as Binet formula, sum. After, we find eigenvalues,
spectral norm and determinant of circulant matrix with the third order
sequence. Finally, we give eigenvalues, norms and determinants of circulant
matrices with Cordonnier, Perrin and Van Der Laan numbers via properties of
circulant matrix with this third order sequence.

Now, we give some preliminaries about circulant matrix and the spectral norm
of a matrix.

The circulant matrix $C=\left[ c_{ij}\right] \in M_{n,n}\left( 
\mathbb{C}
\right) $ is defined by the form 
\begin{equation*}
c_{ij}=\{%
\begin{array}{c}
c_{j-i},\ \ \ \ \ \ \ j\geq i \\ 
c_{n+j-i},\ \ \ j<i%
\end{array}%
\end{equation*}

The spectral norm of $A$, for a matrix $A=\left[ a_{i,j}\right] \in M_{m,n}(%
\mathbb{C}
)$, is given by%
\begin{equation*}
\left\Vert A\right\Vert _{2}=\sqrt{\max\limits_{1\leq i\leq n}\lambda
_{i}\left( A^{\ast }A\right) },
\end{equation*}%
where $A^{\ast }$ is the conjugate transpose of matrix $A$.

\begin{lemma}
\bigskip \lbrack 2] Let $A=circ(a_{0},a_{1},\cdots ,a_{n-1})$ be a $n\times
n $ circulant matrix. Then we have%
\begin{equation*}
\lambda _{j}\left( A\right) =\sum_{k=0}^{n-1}a_{k}w^{-jk},
\end{equation*}%
where $w=e^{\frac{2\pi i}{n}},\ i=\sqrt{-1},\ j=0,1,\ldots n-1.$
\end{lemma}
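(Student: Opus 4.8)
The plan is to exhibit an explicit complete set of eigenvectors for $A$ and simply read the eigenvalues off. The natural candidates are the Fourier vectors $v_j=\left(1,w^{-j},w^{-2j},\ldots ,w^{-(n-1)j}\right)^{T}$ for $j=0,1,\ldots ,n-1$, and I would verify directly that each one is an eigenvector of $A$ with the claimed eigenvalue. Throughout I would index rows and columns by $0,1,\ldots ,n-1$ so that the offsets match the index range in the statement.

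First I would record the structure of $A$ coming from the definition of a circulant matrix: the entry in row $i$ and column $m$ is $a_{(m-i)\bmod n}$ (this reproduces both cases $c_{j-i}$ for $m\geq i$ and $c_{n+j-i}$ for $m<i$). With this, the $i$-th component of $Av_{j}$ is $\sum_{m=0}^{n-1}a_{(m-i)\bmod n}\,w^{-jm}$.

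The key step is the change of summation index $k=(m-i)\bmod n$. As $m$ runs over a complete residue system modulo $n$, so does $k$, and because $w^{n}=1$ the factor $w^{-jm}=w^{-j(i+k)}=w^{-ji}w^{-jk}$ is unaffected by reducing $i+k$ modulo $n$. This factors the $i$-th component as $w^{-ji}\sum_{k=0}^{n-1}a_{k}w^{-jk}$, which is exactly $\lambda _{j}\,(v_{j})_{i}$ with $\lambda _{j}=\sum_{k=0}^{n-1}a_{k}w^{-jk}$. Hence $Av_{j}=\lambda _{j}v_{j}$. I expect this index manipulation to be the one place demanding care: one must invoke $w^{n}=1$ to remove the modular reduction in the exponent, and keep the sign convention straight so that the final answer reads $w^{-jk}$ rather than $w^{jk}$.

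Finally, I would note that the $v_{j}$ are linearly independent, since they form (a rescaling of) the rows of the Vandermonde/Fourier matrix built on the $n$ distinct nodes $w^{-j}$; therefore they constitute a full eigenbasis and the $n$ scalars $\lambda _{j}$ are precisely the eigenvalues of $A$. An essentially equivalent route, which repackages the same computation, is to write $A=\sum_{k=0}^{n-1}a_{k}\pi ^{k}$, where $\pi $ is the basic cyclic-shift circulant, observe that $\pi v_{j}=w^{-j}v_{j}$, and use the fact that any polynomial in $\pi $ shares the eigenvectors of $\pi $; this expresses the result as $\lambda _{j}(A)=p_{A}\!\left( w^{-j}\right) $ for the representer polynomial $p_{A}(x)=\sum_{k=0}^{n-1}a_{k}x^{k}$.
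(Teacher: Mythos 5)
Your argument is correct: the diagonalization by the Fourier vectors $v_{j}=\bigl(1,w^{-j},\ldots ,w^{-(n-1)j}\bigr)^{T}$, the reindexing $k=(m-i)\bmod n$ using $w^{n}=1$, and the Vandermonde argument for linear independence together give a complete and standard proof of the lemma. There is nothing in the paper to compare it against: this statement is quoted from reference [2] (Davis, \emph{Circulant Matrices}) and no proof is supplied in the text, so your write-up simply fills in the classical argument that the cited source contains. Both routes you sketch (direct verification on $v_{j}$, or writing $A=\sum_{k}a_{k}\pi ^{k}$ as a polynomial in the cyclic shift $\pi$) are sound and equivalent; no gaps.
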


\begin{lemma}
\lbrack 1] Let $A$ be an $n\times n$ matrix with eigenvalues $\lambda
_{1},\lambda _{2},\ldots ,\lambda _{n}.$ Then, $A$ is a normal matrix if and
only if the eigenvalues of $AA^{\ast }\ $are $\left\vert \lambda
_{1}\right\vert ^{2},\left\vert \lambda _{2}\right\vert ^{2},\ldots
,\left\vert \lambda _{n}\right\vert ^{2},$ where $A^{\ast }$ is the
conjugate transpose of the matrix $A$.
\end{lemma}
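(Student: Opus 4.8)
The plan is to establish both implications through unitary triangularization. For any $A\in M_{n,n}(\mathbb{C})$ the Schur decomposition provides a unitary matrix $U$ and an upper triangular matrix $T=[t_{ij}]$ with $A=UTU^{\ast }$, where the diagonal entries $t_{11},\ldots ,t_{nn}$ are exactly the eigenvalues $\lambda _{1},\ldots ,\lambda _{n}$ of $A$. Since $AA^{\ast }=U(TT^{\ast })U^{\ast }$ is unitarily similar to $TT^{\ast }$, the two matrices share the same spectrum, so throughout I would work with the triangular factor $T$ rather than with $A$ directly.

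For the forward direction I would invoke the spectral theorem: a normal matrix is unitarily diagonalizable, so its Schur form is already diagonal, $T=\mathrm{diag}(\lambda _{1},\ldots ,\lambda _{n})$. Then $TT^{\ast }=\mathrm{diag}(\left\vert \lambda _{1}\right\vert ^{2},\ldots ,\left\vert \lambda _{n}\right\vert ^{2})$, and because $AA^{\ast }$ is similar to $TT^{\ast }$, its eigenvalues are precisely $\left\vert \lambda _{1}\right\vert ^{2},\ldots ,\left\vert \lambda _{n}\right\vert ^{2}$, as claimed.

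For the converse I would argue by comparing traces. If the eigenvalues of $AA^{\ast }$ are $\left\vert \lambda _{1}\right\vert ^{2},\ldots ,\left\vert \lambda _{n}\right\vert ^{2}$, then $\mathrm{tr}(AA^{\ast })=\sum_{i=1}^{n}\left\vert \lambda _{i}\right\vert ^{2}$. Computing the same trace from the triangular factor gives
\[
\mathrm{tr}(AA^{\ast })=\mathrm{tr}(TT^{\ast })=\sum_{i\leq j}\left\vert t_{ij}\right\vert ^{2}=\sum_{i=1}^{n}\left\vert \lambda _{i}\right\vert ^{2}+\sum_{i<j}\left\vert t_{ij}\right\vert ^{2}.
\]
Equating the two expressions forces $\sum_{i<j}\left\vert t_{ij}\right\vert ^{2}=0$, so every strictly upper-triangular entry of $T$ vanishes and $T$ is diagonal. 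Consequently $A=UTU^{\ast }$ with $T$ diagonal commutes with its adjoint, and $A$ is normal.

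The main obstacle is the converse implication: the hypothesis constrains only the spectrum, equivalently the trace, of $AA^{\ast }$, so the argument must convert a single scalar identity into the much stronger conclusion that $T$ is diagonal. The Frobenius-norm identity $\mathrm{tr}(TT^{\ast })=\sum_{i\leq j}\left\vert t_{ij}\right\vert ^{2}$, read against the diagonal contribution $\sum_{i}\left\vert \lambda _{i}\right\vert ^{2}$, is exactly what makes the off-diagonal mass collapse to zero. Recognizing this trace comparison as the crux, rather than attempting to match eigenvalues entrywise, is the key step.
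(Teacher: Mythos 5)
The paper does not prove this lemma; it is quoted verbatim from reference [1] (Horn and Johnson), so there is no in-paper argument to compare yours against. Your proof is correct and complete as a standalone argument, and it is in fact the standard one: both directions via Schur triangularization $A=UTU^{\ast}$, the forward direction from the spectral theorem, and the converse from the unitary invariance of the Frobenius norm, i.e.\ Schur's inequality $\sum_{i}\left\vert \lambda _{i}\right\vert ^{2}\leq \mathrm{tr}(AA^{\ast })$ with equality forcing the strictly upper-triangular part of $T$ to vanish. The only point worth making explicit is that the hypothesis on the spectrum of $AA^{\ast}$ is used solely through the trace identity $\mathrm{tr}(AA^{\ast })=\sum_{i}\left\vert \lambda _{i}\right\vert ^{2}$ (valid because the eigenvalues of the positive semidefinite matrix $AA^{\ast}$ sum to its trace), which you state correctly; no gap remains.
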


\section{Circulant Matrices with Third Order Linear Sequences}

Firstly, since some of results of this paper concern about the spectral
norm, eigenvalues and determinant of the circulant matrix entried by the Van
Der Laan numbers, we need to introduce some properties of this sequence.

The Binet formula of Van Der Laan sequence, for every $n\in 
\mathbb{N}
$, is obtained as%
\begin{equation}
R_{n}=\frac{\alpha ^{n}}{\left( \alpha -\beta \right) \left( \alpha -\gamma
\right) }+\frac{\beta ^{n}}{\left( \beta -\alpha \right) \left( \beta
-\gamma \right) }+\frac{\gamma ^{n}}{\left( \gamma -\alpha \right) \left(
\gamma -\beta \right) },  \label{3.2}
\end{equation}%
\ \ \ \ \qquad\ where $\alpha $, $\beta $ and $\gamma $ are roots of
equation (\ref{3}).

Also, for $n\geq 1$, we have%
\begin{equation*}
\overset{n}{\underset{k=0}{\dsum }}R_{k}=R_{n+5}-1,
\end{equation*}%
\begin{equation*}
\overset{n}{\underset{k=0}{\dsum }}%
R_{k}^{2}=R_{n+2}^{2}-R_{n-1}^{2}-R_{n-3}^{2}+1.
\end{equation*}

If we take $p=0,q=1,r=1$ in (\ref{5}), we obtain as

\begin{equation}
Z_{n+3}=Z_{n+1}+Z_{n},  \label{4}
\end{equation}%
where initial conditions $Z_{0}=a,~Z_{1}=b,\ Z_{2}=c$ . Also, the Binet
formula of $Z_{n}$ sequence, for every $n\in 
\mathbb{N}
$, is obtained as

\begin{equation}
Z_{n}=\frac{\left( \alpha ^{2}-1\right) a+\alpha b+c}{\left( \alpha -\beta
\right) \left( \alpha -\gamma \right) }\alpha ^{n}+\frac{\left( \beta
^{2}-1\right) a+\beta b+c}{\left( \beta -\alpha \right) \left( \beta -\gamma
\right) }\beta ^{n}+\frac{\left( \gamma ^{2}-1\right) a+\gamma b+c}{\left(
\gamma -\alpha \right) \left( \gamma -\beta \right) }\gamma ^{n},
\label{4.2}
\end{equation}%
where $\alpha $, $\beta $ and $\gamma $ are roots of equation $x^{3}-x-1=0.$

\begin{proposition}
For $n\geq 1$, the following relations are hold:%
\begin{equation*}
\overset{n}{\underset{k=0}{\dsum }}Z_{k}=Z_{n+5}-Z_{4},
\end{equation*}%
\begin{equation*}
\overset{n}{\underset{k=0}{\dsum }}%
Z_{k}^{2}=Z_{n+2}^{2}-Z_{n-1}^{2}-Z_{n-3}^{2}+T
\end{equation*}%
where $T=2a\left( a-c\right) -\left( b+c\right) ^{2}$.
\end{proposition}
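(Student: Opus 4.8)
The plan is to treat the two identities separately, proving the linear sum by telescoping and the quadratic sum by induction on $n$, in both cases reducing everything to repeated use of the defining recurrence $Z_{m+3}=Z_{m+1}+Z_m$ (equivalently $Z_m=Z_{m+3}-Z_{m+1}$). For the first identity I would start from $Z_k=Z_{k+3}-Z_{k+1}$ and sum over $k=0,\dots,n$. Reindexing the two resulting sums as $\sum_{j=3}^{n+3}Z_j-\sum_{j=1}^{n+1}Z_j$ and cancelling the overlapping block $Z_3,\dots,Z_{n+1}$ leaves only $Z_{n+2}+Z_{n+3}-Z_1-Z_2$. One further application of the recurrence, in the forms $Z_{n+5}=Z_{n+3}+Z_{n+2}$ and $Z_4=Z_2+Z_1$, rewrites this as $Z_{n+5}-Z_4$, as claimed. (Equivalently one can run a one-line induction, checking the base case $n=1$ against $Z_6-Z_4=a+b$.)

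For the sum of squares I would induct on $n$, writing $S_n=\sum_{k=0}^{n}Z_k^2$. The point is that the additive constant $T$ drops out of the inductive step: subtracting the proposed formula at $n-1$ from the one at $n$ reduces the problem to the purely recurrence-theoretic identity
\[
Z_n^2 = Z_{n+2}^2 - Z_{n+1}^2 - Z_{n-1}^2 + Z_{n-2}^2 - Z_{n-3}^2 + Z_{n-4}^2,
\]
which no longer involves the initial data $a,b,c$. I would prove this by feeding in the shifted forms $Z_{n+2}=Z_n+Z_{n-1}$ and $Z_{n+1}=Z_{n-1}+Z_{n-2}$, expanding the squares, and then collapsing the surviving cross-terms via $Z_n-Z_{n-2}=Z_{n-3}$ and $Z_{n-1}-Z_{n-3}=Z_{n-4}$; after these substitutions the remainder organizes into $-(Z_{n-1}-Z_{n-3})^2+Z_{n-4}^2=0$, so the identity closes.

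With the inductive step in hand, the constant $T$ is fixed by a single base case. I would verify the formula at $n=1$, which forces $T=S_1-Z_3^2+Z_0^2+Z_{-2}^2$; here the negative index is handled by running the recurrence backwards, giving $Z_{-1}=c-a$ and $Z_{-2}=a+b-c$. Evaluating this expression yields the constant explicitly, and I would cross-check it against the Van Der Laan specialization $a=0,\,b=1,\,c=0$, where the statement must collapse to the already-recorded identity $\sum_{k=0}^{n}R_k^2=R_{n+2}^2-R_{n-1}^2-R_{n-3}^2+1$; in particular $T$ must specialize to $1$ there, which is a useful sanity check on the bookkeeping.

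The main obstacle is the algebra in the quadratic inductive step: one must apply exactly the right shift of the recurrence at each stage so that the quadratic cross-terms cancel in pairs rather than proliferate. The second delicate point is the evaluation of $T$ from the base case, since it is a squared combination of the initial conditions and is precisely the place where a sign slip is easiest to introduce; I would therefore pin it down both by direct computation at $n=1$ and by the Van Der Laan cross-check above before asserting the final closed form.
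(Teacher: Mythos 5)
The paper states this proposition without any proof, so there is nothing to compare your argument against; I can only assess it on its own terms. Your telescoping proof of the first identity is correct and complete: summing $Z_{k}=Z_{k+3}-Z_{k+1}$ over $k=0,\dots ,n$ does leave $Z_{n+2}+Z_{n+3}-Z_{1}-Z_{2}=Z_{n+5}-Z_{4}$. Your inductive step for the sum of squares is also correct: the identity
\begin{equation*}
Z_{n+2}^{2}-Z_{n+1}^{2}-Z_{n}^{2}-Z_{n-1}^{2}+Z_{n-2}^{2}-Z_{n-3}^{2}+Z_{n-4}^{2}=0
\end{equation*}
does follow by exactly the substitutions you describe, collapsing to $-\left( Z_{n-1}-Z_{n-3}\right) ^{2}+Z_{n-4}^{2}=0$.

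The gap is the one computation you defer. Carrying out your own base case at $n=1$ (with $Z_{-2}=a+b-c$, as you correctly derive) gives
\begin{equation*}
T=a^{2}+b^{2}-\left( a+b\right) ^{2}+a^{2}+\left( a+b-c\right)
^{2}=2a^{2}+b^{2}+c^{2}-2ac-2bc=2a\left( a-c\right) +\left( b-c\right) ^{2},
\end{equation*}
which is not the stated constant $2a\left( a-c\right) -\left( b+c\right) ^{2}$; the two differ by $2b^{2}+2c^{2}$. Your own proposed sanity check would have caught this: for $a=0$, $b=1$, $c=0$ the stated formula gives $T=-1$, while the paper's Van Der Laan identity (and direct numerical checking) forces $T=1$. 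So the proposition as printed is false for its stated $T$ (almost certainly a sign slip for $+\left( b-c\right) ^{2}$), and your argument, once the deferred evaluation is actually performed, proves the corrected statement rather than the written one. You should not leave that evaluation as a promissory note, since it is exactly the point where the claim fails.
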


We formulate eigenvalues, spectral norms and determinants of the circulant
matrices with the third order linear sequences. In order to do that, we can
define \textit{the circulant matrix} as follows.

\begin{definition}
$n\times n$ circulant matrix with third order linear sequence entries is
defined by 
\begin{equation}
C(Z)=\left[ 
\begin{array}{ccccc}
Z_{0} & Z_{1} & Z_{2} & \cdots & Z_{n-1} \\ 
Z_{n-1} & Z_{0} & Z_{1} & \cdots & Z_{n-2} \\ 
Z_{n-2} & Z_{n-1} & Z_{0} & \cdots & Z_{n-3} \\ 
\vdots & \vdots & \vdots & \ddots & \vdots \\ 
Z_{1} & Z_{2} & Z_{3} & \cdots & Z_{0}%
\end{array}%
\right] .  \label{1.3}
\end{equation}
\end{definition}

The following theorem gives us the eigenvalues of circulant matrix with
third order linear recurrent sequence.

\begin{theorem}
Let $C(Z)=circ(Z_{0},Z_{1},...,Z_{n-1})$ be circulant matrix. Then the
eigenvalues of $C(Z)\ $are%
\begin{equation*}
\lambda _{j}\left( C(Z)\right) =\frac{Z_{n}-a+\left( Z_{n+1}-b\right)
w^{-j}+\left( Z_{n-1}-c+a\right) w^{-2j}}{w^{-3j}+w^{-2j}-1},
\end{equation*}%
where $w=e^{\frac{2\pi i}{n}},\ i=\sqrt{-1},\ j=0,1,\ldots ,n-1$.
\end{theorem}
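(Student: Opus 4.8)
The plan is to apply Lemma 1 (the formula $\lambda_j(A)=\sum_{k=0}^{n-1}a_k w^{-jk}$ for a circulant matrix) directly to $C(Z)=\mathrm{circ}(Z_0,Z_1,\dots,Z_{n-1})$, so that $\lambda_j(C(Z))=\sum_{k=0}^{n-1}Z_k w^{-jk}$, and then to evaluate this sum in closed form by exploiting the recurrence $Z_{n+3}=Z_{n+1}+Z_n$.

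Let me work out my approach.

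My strategy is to define $S_j=\sum_{k=0}^{n-1}Z_k w^{-jk}$ and hunt for a recurrence-type identity that $S_j$ satisfies. The natural device is to multiply $S_j$ by powers of $w^{-j}$ (equivalently, to consider index-shifted copies of the sum) and combine them using the recurrence $Z_{k+3}=Z_{k+1}+Z_k$. Concretely, I would form the combination $S_j - w^{-2j}S_j - w^{-3j}S_j$ (chosen so that the coefficients $1,-w^{-2j},-w^{-3j}$ mirror the characteristic relation $x^3 - x - 1$ reflected through the shift $w^{-j}$), and then re-index each of the three sums so that a single generic term $Z_m w^{-jm}$ appears. Using the recurrence, the bulk of the interior terms should cancel, leaving only a handful of boundary terms coming from the two ends of the summation range. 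Crucially, because $w^{-jn}=(e^{-2\pi i j/n})^{n}=1$, the shifted sums wrap around periodically, which is what lets the telescoping close up and produces terms involving $Z_n$, $Z_{n+1}$, $Z_{n-1}$ together with the initial values $a=Z_0$, $b=Z_1$, $c=Z_2$.

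The key algebraic steps, in order, are: (i) write $S_j=\sum_{k=0}^{n-1}Z_k w^{-jk}$ from Lemma 1; (ii) compute the shifted sums and use $Z_{k+3}=Z_{k+1}+Z_k$ to telescope, carefully tracking which indices fall outside $\{0,\dots,n-1\}$ and rewriting them via the recurrence and via $w^{-jn}=1$; (iii) collect the surviving boundary terms into the numerator $(Z_n-a)+(Z_{n+1}-b)w^{-j}+(Z_{n-1}-c+a)w^{-2j}$ and the prefactor $(w^{-3j}+w^{-2j}-1)$ into the denominator; (iv) divide to obtain the stated formula. I would double check the numerator constants by testing the boundary bookkeeping against small $n$ or against the first few indices, since the appearance of the combination $Z_{n-1}-c+a$ (rather than a cleaner $Z_{n-1}-c$) signals that the wrap-around contributes an extra $a=Z_0$ term that must be accounted for precisely.

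The main obstacle I anticipate is the boundary-term accounting in step (ii): the telescoping is clean only in the interior, and getting exactly the right leftover terms requires being meticulous about the index ranges and about using the periodicity $w^{-jn}=1$ to fold shifted indices back into the principal range. In particular, I expect the slightly asymmetric numerator (the $+a$ attached to the $w^{-2j}$ coefficient) to emerge only if this wrap-around is handled correctly, so that is where I would concentrate the verification effort. One caveat worth flagging is the denominator $w^{-3j}+w^{-2j}-1$: for this formula to be valid one needs this quantity to be nonzero for the relevant $j$, so I would note that the derivation presumes $w^{-3j}+w^{-2j}\neq 1$ (equivalently, that $w^{-j}$ is not a root of $x^3+x^2-1$), which holds for generic $n$.
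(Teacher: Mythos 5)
Your proposal is correct, and I verified the key computation: writing $u=w^{-j}$ and using $u^{n}=1$, the combination $(u^{3}+u^{2}-1)\sum_{k=0}^{n-1}Z_{k}u^{k}$ telescopes via $Z_{m}=Z_{m-2}+Z_{m-3}$ for $m\geq 3$, and the wrap-around terms collect to $(Z_{n-2}+Z_{n-3})u^{0}+(Z_{n-1}+Z_{n-2})u^{1}+Z_{n-1}u^{2}=Z_{n}+Z_{n+1}u+Z_{n-1}u^{2}$, which after subtracting $a+bu+cu^{2}$ gives exactly the stated numerator, including the asymmetric $+a$ on the $u^{2}$ coefficient (it arises because the $u^{2}S$ shift leaves $Z_{0}u^{2}$ unpaired while $u^{3}S$ wraps $Z_{n-1}$ into the $u^{2}$ slot). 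However, your route is genuinely different from the paper's: the authors substitute the Binet formula (\ref{4.2}) for $Z_{k}$, sum three geometric series $\sum(\alpha w^{-j})^{k}$, etc., using $(\alpha w^{-j})^{n}=\alpha^{n}$, and then recombine over the common denominator $(\alpha w^{-j}-1)(\beta w^{-j}-1)(\gamma w^{-j}-1)$ using the symmetric relations (\ref{1.1}) to recover $Z_{n},Z_{n+1},Z_{n-1}$ — a step they compress into ``by considering (\ref{1.1}) and (\ref{4.2}), we obtain.'' Your telescoping argument is more elementary (it needs only the recurrence and $w^{-jn}=1$, not the explicit roots or the Binet form) and makes the boundary bookkeeping, which is where the real content lies, fully explicit; the paper's approach has the advantage of generalizing mechanically to any sequence given in Binet form. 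Your caveat about the denominator can be sharpened: $w^{-3j}+w^{-2j}-1$ never vanishes for any $n$ and $j$, since the roots of $x^{3}+x^{2}-1$ are the reciprocals of $\alpha,\beta,\gamma$ and none of these lies on the unit circle.
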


\begin{proof}
By taking $X=\left( \alpha -\beta \right) \left( \alpha -\gamma \right) $,$\
Y=\left( \beta -\alpha \right) \left( \beta -\gamma \right) $,$\ W=\left(
\gamma -\alpha \right) \left( \gamma -\beta \right) ,\ $from Lemma 1, we have%
\begin{eqnarray*}
\lambda _{j}\left( C(Z)\right) &=&\sum_{k=0}^{n-1}Z_{k}w^{-jk} \\
&=&\sum_{k=0}^{n-1}\left( 
\begin{array}{c}
\frac{\left( \alpha ^{2}-1\right) a+\alpha b+c}{X}\alpha ^{k}+\frac{\left(
\beta ^{2}-1\right) a+\beta b+c}{Y}\beta ^{k} \\ 
+\frac{\left( \gamma ^{2}-1\right) a+\gamma b+c}{W}\gamma ^{k}%
\end{array}%
\right) w^{-jk} \\
&=&%
\begin{array}{c}
\frac{\left( \alpha ^{2}-1\right) a+\alpha b+c}{X}\left( \frac{\left( \alpha
w^{-j}\right) ^{n}-1}{\alpha w^{-j}-1}\right) +\frac{\left( \beta
^{2}-1\right) a+\beta b+c}{Y}\left( \frac{\left( \beta w^{-j}\right) ^{n}-1}{%
\beta w^{-j}-1}\right) \\ 
+\frac{\left( \gamma ^{2}-1\right) a+\gamma b+c}{W}\left( \frac{\left(
\gamma w^{-j}\right) ^{n}-1}{\gamma w^{-j}-1}\right)%
\end{array}
\\
&=&%
\begin{array}{c}
\frac{\left( \alpha ^{n}-1\right) \left[ \left( \alpha ^{2}-1\right)
a+\alpha b+c\right] }{X\left( \alpha w^{-j}-1\right) } \\ 
+\frac{\left( \beta ^{n}-1\right) \left[ \left( \beta ^{2}-1\right) a+\beta
b+c\right] }{Y\left( \beta w^{-j}-1\right) }+\frac{\left( \gamma
^{n}-1\right) \left[ \left( \gamma ^{2}-1\right) a+\gamma b+c\right] }{%
W\left( \gamma w^{-j}-1\right) }%
\end{array}%
.
\end{eqnarray*}%
By considering (\ref{1.1}) and (\ref{4.2}), we obtain%
\begin{equation*}
\lambda _{j}\left( C(Z)\right) =\frac{Z_{n}-a+\left( Z_{n+1}-b\right)
w^{-j}+\left( Z_{n-1}-c+a\right) w^{-2j}}{w^{-3j}+w^{-2j}-1}
\end{equation*}%
which is desired.
\end{proof}

By using Theorem 5, we obtain eigenvalues of circulant matrices with
Cordonnier, Perrin and Van Der Laan sequences as follows.

\begin{corollary}
$i)$ Let $C(P)=circ(P_{0},P_{1},...,P_{n-1})$ be circulant matrix. If we
take $a=b=c=1$ in Theorem 5, then the eigenvalues of $C(P)\ $are%
\begin{equation*}
\lambda _{j}\left( C(P)\right) =\frac{P_{n}-1+\left( P_{n+1}-1\right)
w^{-j}+P_{n-1}w^{-2j}}{w^{-3j}+w^{-2j}-1},
\end{equation*}%
where $P_{n}$ is the $n$th Cordonnier number and $w=e^{\frac{2\pi i}{n}},\ i=%
\sqrt{-1},\ j=0,1,\ldots ,n-1$.
\end{corollary}

$ii)$Let $C(Q)$ be circulant matrix with Perrin numbers. If we take $%
a=3,b=0,c=2$, the eigenvalues of $C(Q)\ $are%
\begin{equation*}
\lambda _{j}\left( C(Q)\right) =\frac{Q_{n}-3+Q_{n+1}w^{-j}+\left(
Q_{n-1}+1\right) w^{-2j}}{w^{-3j}+w^{-2j}-1},
\end{equation*}%
where $Q_{n}$ is the $n$th Perrin number and $w=e^{\frac{2\pi i}{n}},\ i=%
\sqrt{-1},\ j=0,1,\ldots ,n-1$.

$iii)$Let $C(R)=circ(R_{0},R_{1},...,R_{n-1})$ be circulant matrix. If we
take $a=c=0,b=1$ in theorem 5 the eigenvalues of $C(R)\ $are%
\begin{equation*}
\lambda _{j}\left( C(R)\right) =\frac{R_{n}+\left( R_{n+1}-1\right)
w^{-j}+R_{n-1}w^{-2j}}{w^{-3j}+w^{-2j}-1},
\end{equation*}%
where $R_{n}$ is the $n$th Van Der Laan number and $w=e^{\frac{2\pi i}{n}},\
i=\sqrt{-1},\ j=0,1,\ldots ,n-1.$

Because matrix $C(Z)$ is normal matrix, we can write $\lambda _{j}\left(
C(Z)C(Z)^{\ast }\right) =\left\vert \lambda _{j}\left( C(Z)\right)
\right\vert ^{2}$. Also, since the matrices $C(P)$, $C(Q)$ and $C(R)$ are
normal matrices, we can write $\lambda _{j}\left( C(P)C(P)^{\ast }\right)
=\left\vert \lambda _{j}\left( C(P)\right) \right\vert ^{2}$, $\lambda
_{j}\left( C(Q)C(Q)^{\ast }\right) =\left\vert \lambda _{j}\left(
C(Q)\right) \right\vert ^{2}$ and $\lambda _{j}\left( C(R)C(R)^{\ast
}\right) =\left\vert \lambda _{j}\left( C(R)\right) \right\vert ^{2}$. Then,
we have the following theorem and corollary that deal with spectral norm.

\begin{theorem}
Let $C(Z)$ be an $n\times n$ circulant matrix with the $Z_{n}$ entries.
Then, we have
\end{theorem}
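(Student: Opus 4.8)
The plan is to collapse the spectral norm onto a single eigenvalue by exploiting that every circulant matrix is normal. First I would invoke this normality of $C(Z)$ so that Lemma 2 applies: the eigenvalues of $C(Z)C(Z)^{\ast}$ are exactly $\left\vert \lambda_j\left(C(Z)\right)\right\vert^2$ for $j=0,1,\ldots,n-1$. Substituting into the definition of the spectral norm yields
\[
\left\Vert C(Z)\right\Vert_2 = \sqrt{\max_{0\le j\le n-1}\lambda_j\!\left(C(Z)C(Z)^{\ast}\right)} = \max_{0\le j\le n-1}\left\vert \lambda_j\left(C(Z)\right)\right\vert ,
\]
so the entire problem reduces to identifying which index $j$ maximizes $\left\vert \lambda_j\left(C(Z)\right)\right\vert$.

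Next I would argue that the maximum is attained at $j=0$. Starting from the raw eigenvalue expression of Lemma 1, namely $\lambda_j\left(C(Z)\right)=\sum_{k=0}^{n-1}Z_k w^{-jk}$, the triangle inequality together with $\left\vert w^{-jk}\right\vert =1$ gives $\left\vert \lambda_j\left(C(Z)\right)\right\vert \le \sum_{k=0}^{n-1}\left\vert Z_k\right\vert$. Because the entries $Z_k$ are non-negative for the initial data of interest (Cordonnier, Perrin and Van Der Laan, treated in the following corollary), this bound equals $\sum_{k=0}^{n-1}Z_k$, which is precisely $\lambda_0\left(C(Z)\right)$. Hence $\lambda_0$ dominates every $\left\vert \lambda_j\right\vert$, and the maximum is $\lambda_0=\sum_{k=0}^{n-1}Z_k$. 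Finally I would put this sum into closed form using the first identity of the Proposition: replacing $n$ by $n-1$ in $\sum_{k=0}^{n}Z_k=Z_{n+5}-Z_4$ gives $\sum_{k=0}^{n-1}Z_k=Z_{n+4}-Z_4$, so that $\left\Vert C(Z)\right\Vert_2 = Z_{n+4}-Z_4$.

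The step I expect to be the genuine obstacle is the claim that $j=0$ realizes the maximum, since the triangle-inequality bound is tight only when all the terms $Z_k$ carry a common sign. This is why the clean value $Z_{n+4}-Z_4$ is really a statement about sequences with non-negative terms; for arbitrary $a,b,c$ one would at most obtain $\max_j\left\vert \lambda_j\right\vert$ without the collapse to the summation formula. The remaining ingredients, that is normality, the application of Lemma 2, and the summation identity from the Proposition, are each immediate once this positivity observation is in place, and the corollary then follows by substituting the specific initial values (for instance $\left\Vert C(R)\right\Vert_2 = R_{n+4}-1$ from $R_4=1$).
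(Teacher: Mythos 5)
Your proposal follows essentially the same route as the paper: reduce $\left\Vert C(Z)\right\Vert _{2}$ to $\max_{j}\left\vert \lambda _{j}\left( C(Z)\right) \right\vert$ via normality and Lemma 2, identify $j=0$ as the maximizer, and evaluate $\lambda _{0}=\sum_{k=0}^{n-1}Z_{k}=Z_{n+4}-Z_{4}$. In fact your write-up is more careful than the paper's at the one nontrivial step, since the paper merely asserts that $\lambda _{0}$ is maximal, whereas you supply the triangle-inequality argument and correctly flag that it requires the entries $Z_{k}$ to be non-negative, a hypothesis the theorem as stated (for arbitrary $a,b,c$) does not actually guarantee.
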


\begin{equation*}
\left\Vert C(Z)\right\Vert _{2}=Z_{n+4}-Z_{4}.
\end{equation*}

\begin{proof}
From Lemma 2, we can write%
\begin{equation*}
\left\Vert C(Z)\right\Vert _{2}=\sqrt{\left( \max\limits_{0\leq j\leq
n-1}\left\vert \lambda _{j}\left( C(Z)\right) \right\vert ^{2}\right) }.
\end{equation*}%
In this last equality, for $j=0$, $\lambda _{0}$ \ becomes \ the maximum
eigenvalue. Thus, $\left\Vert C(Z)\right\Vert _{2}=\left\vert \lambda
_{0}\left( C(Z)\right) \right\vert $. Also, from the Theorem 5, we clearly
obtain%
\begin{equation*}
\left\Vert C(Z)\right\Vert _{2}=Z_{n+4}-Z_{4}.
\end{equation*}%
Hence proof is completed.
\end{proof}

For special cases of $a,b,c$ in Theorem 7, we obtain following corollary for
spectral norms of $C(P),C(Q)$ and $C(R)$.

\begin{corollary}
$i)$ Let $C(P)$ be an $n\times n$ circulant matrix with the Cordonnier
numbers entries.  If we take $a=b=c=1$ in (\ref{4}),we obtain Cordonnier
sequence. And so, from Theorem 7, we obtain  
\begin{equation*}
\left\Vert C(P)\right\Vert _{2}=P_{n+4}-2
\end{equation*}%
where $P_{n}$ is $n$th Cordonnier number.
\end{corollary}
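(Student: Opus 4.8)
The plan is to obtain this as an immediate specialization of Theorem 7. First I would observe that setting $a=b=c=1$ in the recurrence (\ref{4}) produces exactly the initial data $Z_0=Z_1=Z_2=1$ of the Cordonnier sequence defined in (\ref{1}). Since both sequences obey the same recurrence $Z_{n+3}=Z_{n+1}+Z_n$, they agree at every index, so $Z_n=P_n$ for all $n$ and consequently $C(Z)=C(P)$. This identification is the only conceptual ingredient; everything else is a substitution.

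Next I would invoke Theorem 7 verbatim. That theorem asserts $\left\Vert C(Z)\right\Vert_2 = Z_{n+4}-Z_4$ for the general third order sequence, and its proof rests only on the facts that $\lambda_0$ realizes the maximum-modulus eigenvalue and that $C(Z)$ is normal (so that $\left\Vert C(Z)\right\Vert_2=\left\vert\lambda_0\right\vert$). Since $C(P)$ is likewise normal, as already noted before Theorem 7, the same argument applies without change, and substituting $Z_n=P_n$ gives $\left\Vert C(P)\right\Vert_2 = P_{n+4}-P_4$.

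The only remaining task is to evaluate $P_4$. Using (\ref{1}) with $P_0=P_1=P_2=1$ yields $P_3=P_1+P_0=2$ and then $P_4=P_2+P_1=2$, so that $\left\Vert C(P)\right\Vert_2 = P_{n+4}-2$, which is the claimed identity. There is essentially no obstacle in this corollary: the statement follows by specialization together with a two-line recurrence computation. If I were to flag any point at all, it is simply the verification that the hypotheses of Theorem 7 (normality and the location of the dominant eigenvalue at $j=0$) persist under the choice $a=b=c=1$, and both hold immediately for the Cordonnier case.
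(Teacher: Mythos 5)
Your proposal is correct and follows exactly the route the paper intends: specialize Theorem 7 at $a=b=c=1$, identify $Z_n=P_n$, and evaluate $Z_4=P_4=2$ from the recurrence (the paper treats this corollary as an immediate substitution and gives no further argument). Your added remark about checking that normality and the location of the dominant eigenvalue persist is a sensible precaution, but it does not change the argument.
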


$ii)$ Similarly to $i)$, if we take $a=3,b=0,c=2$ in (\ref{4}), we obtain
for $C(Q)$ matrix%
\begin{equation*}
\left\Vert C(Q)\right\Vert _{2}=Q_{n+4}-2.
\end{equation*}%
where $Q_{n}$ is $n$th Perrin number.

$iii)$ Similarly,  if we take $a=c=0,b=1$ in (\ref{4}), we have%
\begin{equation*}
\left\Vert C(R)\right\Vert _{2}=R_{n+4}-1.
\end{equation*}%
where $R_{n}$ is $n$th Van Der Laan number.

Let $A=\left[ a_{ij}\right] $ be an $n\times n$ matrix. Since row norm and
column norm of the $A$ matrix are $\left\Vert A\right\Vert _{\infty }=%
\underset{1\leq i\leq n}{\max }\left( \underset{j=1}{\overset{n}{\dsum }}%
\left\vert a_{ij}\right\vert \right) $ and $\left\Vert A\right\Vert _{1}=%
\underset{1\leq j\leq n}{\max }\left( \underset{i=1}{\overset{n}{\dsum }}%
\left\vert a_{ij}\right\vert \right) $, it can be easily seen that $%
\left\Vert C(Z)\right\Vert _{2}=\left\Vert C(Z)\right\Vert _{1}=\left\Vert
C(Z)\right\Vert _{\infty }.$ Also, $\left\Vert C(P)\right\Vert
_{2}=\left\Vert C(P)\right\Vert _{1}=\left\Vert C(P)\right\Vert _{\infty },$ 
$\left\Vert C(Q)\right\Vert _{2}=\left\Vert C(Q)\right\Vert _{1}=\left\Vert
C(Q)\right\Vert _{\infty }$ and $\left\Vert C(R)\right\Vert _{2}=\left\Vert
C(R)\right\Vert _{1}=\left\Vert C(R)\right\Vert _{\infty }.$

The following theorem gives us the determinant of $C(Z)$.

\begin{theorem}
The determinant \ of \ the matrix $C(Z)=circ\left( Z_{1},Z_{2},\ldots
,Z_{n}\right) $ is written by 
\begin{equation*}
\det (C(Z))=\frac{\left( Z_{n}-a\right) ^{n}\left(
1-K^{n}-L^{n}+K^{n}L^{n}\right) }{\left( -1\right) ^{n}\left(
Q_{-n}-Q_{n}\right) },
\end{equation*}%
where $K=\dfrac{b-Z_{n+1}-\sqrt{\left( Z_{n+1}-b\right) ^{2}-4\left(
Z_{n}-a\right) \left( Z_{n-1}-c+a\right) }}{2\left( Z_{n}-a\right) }$,
\end{theorem}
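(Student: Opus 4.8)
The plan is to start from the fact that the determinant of any circulant matrix is the product of its eigenvalues, so that by Theorem 5,
\begin{equation*}
\det(C(Z))=\prod_{j=0}^{n-1}\lambda_j(C(Z))=\prod_{j=0}^{n-1}\frac{(Z_n-a)+(Z_{n+1}-b)w^{-j}+(Z_{n-1}-c+a)w^{-2j}}{w^{-3j}+w^{-2j}-1}.
\end{equation*}
I would then regard the numerator and denominator as two fixed polynomials evaluated at the points $x=w^{-j}$, and compute the product over $j$ of each separately. The basic tool is the factorization $\prod_{j=0}^{n-1}(x-w^{-j})=x^n-1$, which holds because $w^{-j}$, $j=0,\dots,n-1$, runs over all $n$th roots of unity; from it one reads off the two consequences $\prod_{j=0}^{n-1}(1-cw^{-j})=1-c^n$ and $\prod_{j=0}^{n-1}(w^{-j}-c)=(-1)^n(c^n-1)$ that drive the whole computation.

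For the numerator I would factor the quadratic $(Z_{n-1}-c+a)x^2+(Z_{n+1}-b)x+(Z_n-a)$ in the reciprocal form $(Z_n-a)(1-Kx)(1-Lx)$. Matching coefficients forces $K+L=(b-Z_{n+1})/(Z_n-a)$ and $KL=(Z_{n-1}-c+a)/(Z_n-a)$, i.e. $K$ and $L$ are the roots of $(Z_n-a)t^2-(b-Z_{n+1})t+(Z_{n-1}-c+a)=0$; the quadratic formula then produces precisely the stated $K$ (and its companion $L$ with the opposite sign on the radical). Substituting $x=w^{-j}$ and applying $\prod_j(1-cw^{-j})=1-c^n$ to each factor, the numerator product collapses to $(Z_n-a)^n(1-K^n)(1-L^n)=(Z_n-a)^n(1-K^n-L^n+K^nL^n)$, the numerator in the claim.

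The crux is the denominator, where the key observation is a reciprocal-root relation. Since $\alpha,\beta,\gamma$ satisfy $x^3-x-1=0$, dividing by $x^3$ shows that $1/\alpha,1/\beta,1/\gamma$ are exactly the roots of $y^3+y^2-1=0$, so $x^3+x^2-1=(x-1/\alpha)(x-1/\beta)(x-1/\gamma)$. Viewing the denominator as this cubic at $x=w^{-j}$ and using $\prod_j(w^{-j}-c)=(-1)^n(c^n-1)$ for each factor yields $\prod_{j=0}^{n-1}(w^{-3j}+w^{-2j}-1)=(-1)^n(\alpha^{-n}-1)(\beta^{-n}-1)(\gamma^{-n}-1)$. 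Expanding the last product and using $\alpha\beta\gamma=1$ together with the power-sum identities $\alpha^n+\beta^n+\gamma^n=Q_n$ and $\alpha^{-n}+\beta^{-n}+\gamma^{-n}=Q_{-n}$ (so that, for instance, $(\alpha\beta)^{-n}+(\beta\gamma)^{-n}+(\alpha\gamma)^{-n}=\gamma^n+\alpha^n+\beta^n=Q_n$) collapses the bracket to $1-Q_n+Q_{-n}-1=Q_{-n}-Q_n$, giving the denominator $(-1)^n(Q_{-n}-Q_n)$. Combining the two products then yields the stated formula. The step I expect to be most error-prone is this final expansion: correctly recognizing each elementary symmetric function of the $\alpha^{-n},\beta^{-n},\gamma^{-n}$ as a Perrin number via $\alpha\beta\gamma=1$, and carefully tracking the signs together with the collected factor $(-1)^{3n}=(-1)^n$.
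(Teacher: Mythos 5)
Your proposal is correct and follows essentially the same route as the paper: expressing $\det(C(Z))$ as the product of the eigenvalues from Theorem 5 and evaluating the numerator and denominator products over the $n$th roots of unity. In fact your argument is more complete than the paper's, which simply quotes the product identity $\prod_{k=0}^{n-1}\left( x-yw^{-k}+zw^{-2k}\right)=x^{n}\bigl(1-K^{n}-L^{n}+(z/x)^{n}\bigr)$ for the numerator and never derives the denominator value $(-1)^{n}(Q_{-n}-Q_{n})$, whereas you supply both computations, including the reciprocal-root factorization $x^{3}+x^{2}-1=(x-1/\alpha)(x-1/\beta)(x-1/\gamma)$ and the reduction via $Q_{n}=\alpha^{n}+\beta^{n}+\gamma^{n}$.
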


$L=\dfrac{b-Z_{n+1}+\sqrt{\left( Z_{n+1}-b\right) ^{2}-4\left(
Z_{n}-a\right) \left( Z_{n-1}-c+a\right) }}{2\left( Z_{n}-a\right) }$.

\begin{proof}
From Theorem 5, we have 
\begin{eqnarray*}
\det (C(Z)) &=&\prod\limits_{j=0}^{n-1}\lambda _{j}\left( C(Z)\right) \\
&=&\prod\limits_{j=0}^{n-1}\frac{Z_{n}-a+\left( Z_{n+1}-b\right)
w^{-j}+\left( Z_{n-1}-c+a\right) w^{-2j}}{w^{-3j}+w^{-2j}-1}.
\end{eqnarray*}%
By considering the equality%
\begin{equation*}
\prod\limits_{k=0}^{n-1}\left( x-yw^{-k}+zw^{-2k}\right) =x^{n}\left(
1-\left( \frac{y-\sqrt{y^{2}-4xz}}{2x}\right) ^{n}-\left( \frac{y+\sqrt{%
y^{2}-4xz}}{2x}\right) ^{n}+\left( \frac{z}{x}\right) ^{n}\right) ,
\end{equation*}%
\ we have%
\begin{equation*}
\det (C(Z))=\frac{\left( Z_{n}-a\right) ^{n}\left(
1-K^{n}-L^{n}+K^{n}L^{n}\right) }{\left( -1\right) ^{n}\left(
Q_{-n}-Q_{n}\right) }.
\end{equation*}%
Hence proof is completed.
\end{proof}

By using in Theorem 9, we obtain determinants of circulant matrices with
Cordonnier, Perrin and Van Der Laan sequences as follows.

\begin{corollary}
$i)$ Let $a=b=c=1$ in Theorem 9. Then, the determinant \ of \ the matrix $%
C(P)=circ\left( P_{1},P_{2},\ldots ,P_{n}\right) $ is written by 
\begin{equation*}
\det (C(P))=\frac{\left( P_{n}-1\right) ^{n}\left(
1-K^{n}-L^{n}+K^{n}L^{n}\right) }{\left( -1\right) ^{n}\left(
Q_{-n}-Q_{n}\right) },
\end{equation*}%
where $K=\dfrac{1-P_{n+1}-\sqrt{\left( 1-P_{n+1}\right)
^{2}-4P_{n}P_{n-1}+4P_{n-1}}}{2\left( P_{n}-1\right) }$,
\end{corollary}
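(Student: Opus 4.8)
The plan is to obtain this corollary as an immediate specialization of Theorem 9, so the work is bookkeeping rather than any new estimate. First I would observe that substituting $a=b=c=1$ into the recurrence (\ref{4}) reproduces exactly the Cordonnier recurrence (\ref{1}): the relation $Z_{n+3}=Z_{n+1}+Z_n$ is identical to $P_{n+3}=P_{n+1}+P_n$, and the initial data $Z_0=Z_1=Z_2=1$ coincide with $P_0=P_1=P_2=1$. Hence $Z_k=P_k$ for every $k$, so that $C(Z)=C(P)$ and the formula of Theorem 9 applies verbatim with $Z_k$ replaced by $P_k$.

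Next I would carry out the substitution $a=b=c=1$ in each factor appearing in Theorem 9. The leading factor becomes $(Z_n-a)^n=(P_n-1)^n$; the bracket $1-K^n-L^n+K^nL^n$ keeps its shape once $K$ and $L$ are re-expressed; and the denominator $(-1)^n(Q_{-n}-Q_n)$ does not involve $a,b,c$, so it is unchanged. It then remains only to rewrite the constants $K$ and $L$. Using $b-Z_{n+1}=1-P_{n+1}$, $Z_n-a=P_n-1$, and $Z_{n-1}-c+a=P_{n-1}-1+1=P_{n-1}$, the radicand transforms as $(Z_{n+1}-b)^2-4(Z_n-a)(Z_{n-1}-c+a)=(1-P_{n+1})^2-4(P_n-1)P_{n-1}=(1-P_{n+1})^2-4P_nP_{n-1}+4P_{n-1}$, which is precisely the discriminant displayed in the statement. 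Dividing through by $2(Z_n-a)=2(P_n-1)$ then gives the stated $K$, and the analogous computation with the opposite sign before the root gives $L$.

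Assembling these pieces yields the claimed expression for $\det(C(P))$. There is no genuine obstacle here; the only points requiring a moment's care are the cancellation $Z_{n-1}-c+a=P_{n-1}$, where the combination $-c+a$ vanishes and leaves $P_{n-1}$ rather than $P_{n-1}-1$, and the bookkeeping of signs in $K$ and $L$ after writing $b-Z_{n+1}=1-P_{n+1}$. Once these substitutions are confirmed to match the displayed formula, the corollary follows directly from Theorem 9.
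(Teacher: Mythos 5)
Your proposal is correct and matches the paper's own treatment: the paper offers no separate argument for this corollary, obtaining it exactly as you do by substituting $a=b=c=1$ into Theorem 9 and simplifying $Z_{n-1}-c+a$ to $P_{n-1}$. Your bookkeeping of the radicand $(1-P_{n+1})^2-4P_nP_{n-1}+4P_{n-1}$ and of $K$, $L$ agrees with the displayed formulas.
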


$L=\dfrac{1-P_{n+1}+\sqrt{\left( 1-P_{n+1}\right) ^{2}-4P_{n}P_{n-1}+4P_{n-1}%
}}{2\left( P_{n}-1\right) }$.

$ii)$ If we take $a=3,b=0,c=2$ in Theorem 9. Then, the determinant \ of \
the matrix $C(Q)=circ\left( Q_{1},Q_{2},\ldots ,Q_{n}\right) $ is written by 
\begin{equation*}
\det (C(Q))=\frac{\left( Q_{n}-3\right) ^{n}\left(
1-K^{n}-L^{n}+K^{n}L^{n}\right) }{\left( -1\right) ^{n}\left(
Q_{-n}-Q_{n}\right) },
\end{equation*}%
where $K=\dfrac{-Q_{n+1}-\sqrt{Q_{n+1}^{2}-4Q_{n}Q_{n-1}-4Q_{n}+12Q_{n-1}+12}%
}{2\left( Q_{n}-3\right) }$ and

$L=\dfrac{-Q_{n+1}+\sqrt{Q_{n+1}^{2}-4Q_{n}Q_{n-1}-4Q_{n}+12Q_{n-1}+12}}{%
2\left( Q_{n}-3\right) }$.

$iii)$Let $a=c=0,b=1$ in Theorem 9. Then, the determinant \ of \ the matrix $%
C(R)=circ\left( R_{1},R_{2},\ldots ,R_{n}\right) $ is written by 
\begin{equation*}
\det (C(R))=\frac{R_{n}^{n}\left( 1-K^{n}-L^{n}+K^{n}L^{n}\right) }{\left(
-1\right) ^{n}\left( Q_{-n}-Q_{n}\right) },
\end{equation*}%
where $K=\dfrac{1-R_{n+1}-\sqrt{\left( 1-R_{n+1}\right) ^{2}-4R_{n}R_{n-1}}}{%
2R_{n}}$,

$L=\dfrac{1-R_{n+1}+\sqrt{\left( 1-R_{n+1}\right) ^{2}-4R_{n}R_{n-1}}}{2R_{n}%
}$.

\end{document}